\declaretheoremstyle[headformat=swapnumber, spaceabove=\paraskip,
bodyfont=\itshape]{mystyle}
\declaretheorem[name=Lemma, sibling=para, style=mystyle]{Lemma}
\declaretheorem[name=Proposition, sibling=para, style=mystyle]{Proposition}
\declaretheorem[name=Theorem, sibling=para, style=mystyle]{Theorem}
\declaretheorem[name=Definition, sibling=para, style=mystyle]{Definition}
\declaretheoremstyle[numbered=no, spaceabove=\paraskip,
bodyfont=\itshape]{mystyle-empty}
\declaretheorem[name=Lemma, style=mystyle-empty]{Lemma*}
\declaretheorem[name=Proposition, style=mystyle-empty]{Proposition*}
\declaretheorem[name=Theorem, style=mystyle-empty]{Theorem*}
\declaretheorem[name=Corollary, style=mystyle-empty]{Corollary*}
\declaretheorem[name=Definition, style=mystyle-empty]{Definition*}
\declaretheorem[name=Examples, style=mystyle-empty]{Example*}
\declaretheorem[name=Remark, style=mystyle-empty]{Remark*}
\declaretheoremstyle[
        headformat={{\bfseries\NUMBER.}{ \bfseries\NAME}},
        spaceabove=\paraskip, 
        headpunct={. },
        headfont=\bfseries,
        bodyfont=\normalfont
        ]{mystyle-plain}
\renewenvironment{proof}[1][\textit{Proof}]{\par
  \pushQED{\qed}%
  \normalfont \topsep.75\paraskip\relax
  \trivlist
  \item[\hskip\labelsep
        \itshape
    #1\@addpunct{.}]\ignorespaces
}{%
  \popQED\endtrivlist\@endpefalse
}
\newcommand\NN{\mathbb N}
\newcommand\CC{\mathbb C}
\newcommand\ZZ{\mathbb Z}
\newcommand\ot{\otimes}
\renewcommand\to{\longrightarrow}
\renewcommand\phi{\varphi}
\newcommand\gl{\mathfrak{gl}}
\newcommand\gen{\mathsf{gen}}
\newcommand\std{\mathsf{std}}
\DeclareMathOperator\End{End}
\DeclareMathOperator\Specm{Specm}
\title{A new way to construct $1$-singular Gelfand-Tsetlin modules}
\author{Pablo Zadunaisky
\footnote{Instituto de Matem\'atica e Estat\'istica, Universidade de S\~ao
Paulo,  S\~ao Paulo SP, Brasil. \texttt{email:} pzadun@ime.usp.br.
The author is a FAPESP PostDoc Fellow, grant: 2016-25984-1 
S\~ao Paulo Research Foundation (FAPESP).}
}
\begin{document}
\maketitle

\begin{abstract}
We present a simplified way to construct the Gelfand-Tsetlin modules over 
$\gl(n,\CC)$ related to a $1$-singular GT-tableau defined in 
\cite{FGR-singular-gt}. We begin by reframing the classical construction of 
generic Gelfand-Tsetlin modules found in \cite{DFO-GT-modules}, showing 
that they form a flat family over generic points of $\CC^{\binom{n}{2}}$. We
then show that this family can be extended to a flat family over a variety
including generic points and $1$-singular points for a fixed singular pair
of entries. The $1$-singular modules are precisely the fibers over these
points.
\end{abstract}

\noindent\textbf{MSC 2010 Classification:} 17B10.\\
\noindent\textbf{Keywords:} Gelfand-Tsetlin modules, Gelfand-Tsetlin bases,
tableaux realization.

\section{Introduction}
A constant theme throughout the work of Sergei Ovsienko was the notion of  
Gelfand-Tsetlin subalgebras and modules introduced for $\mathfrak{sl}(3,\CC)$ 
in \cite{DFO-gt-modules-sl3}, then more generally for $\gl(n,\CC)$ in 
\cite{DFO-GT-modules-original}, finally arriving at the more general 
definition of Harish-Chandra algebras and modules in 
\cite{DFO-GT-modules}. He returned several times to the subject and its 
applications to representation theory, as can be seen in the articles 
\cites{MO-verma-gt, Ovs-finiteness-gt, Ovs-strong-nilp, FMO-hc-for-yangians, 
FO-galois, FMO-gk-conjecture-gt, FOS-torsion, FOS-gt-categories, 
FO-fibers-gt-categories}. There were also many independent 
developments such as the construction of quantized versions of Gelfand-Tsetlin
modules \cite{MT-q-gt}, the study of Gelfand-Tsetlin algebras and modules over
orthogonal Lie algebras \cites{Maz-ort-gt, Maz-gt-ort-modules}, and the 
construction of several new families of infinite dimensional Gelfand-Tsetlin
modules, including applications to the classification of irreducible modules
over $\gl(n,\CC)$ \cites{FGR-irred-gt-sl3, FGR-irred-gt-gl, FGR-singular-gt},
just to name a few. It is a pleasure to contribute to the continuing 
development of this subject. 

The notion of a Gelfand-Tsetlin module (see Definition \ref{D:gt-modules}) has 
its origin in the classical article \cite{GT-modules}, where I. Gelfand and M. 
Tsetlin gave a presentation of all the finite dimensional irreducible 
representations of $\gl(n,\CC)$ in terms of certain tableaux, which have come 
to be known as \emph{Gelfand-Tsetlin tableaux}, or GT-tableaux for shot. The 
action of $\gl(n,\CC)$ over a tableau is given by rational functions in its
entries, and the poles of these rational functions form a nowhere dense set
in $\CC^{\binom{n}{2}}$; the formulas for this action are known as the 
Gelfand-Tsetlin formulas. Starting from this observation, Yu. Drozd, Ovsienko 
and V. Futorny introduced a large family of infinite dimensional 
$\gl(n,\CC)$-modules in \cite{DFO-GT-modules}. These modules have a basis 
parametrized by Gelfand-Tsetlin tableaux with complex coefficients such that 
no pattern is a pole for the rational functions appearing in the GT-formulae 
(such tableaux are called \emph{generic}, hence the name ``generic 
Gelfand-Tsetlin module''). 

In \cite{FGR-singular-gt}, Futorny, D. Grantcharov and L. E. Ram\'\i rez built 
new GT-modules starting from a $1$-singular tableaux (see Definition 
\ref{D:various-tableaux}). These $1$-singular modules are given by generators
and relations, and the action of $\gl(n,\CC)$ is given by new explicit 
formulas; the proof that these formulas indeed define an action of $\gl(n,\CC)$
requires long calculations. We present a new approach, which consist of first
building a ``universal generic GT-module'' (Definition \ref{T:generic-GT})
and then finding a ``universal $1$-singular module'' (Definition \ref{D:V-B})
as a submodule. Thus the $\gl(n,\CC)$-module structure is built in by 
construction on the singular modules, and the formulas for this action are
deduced from those of the generic case. 

Let us be a little more explicit. The set of poles $P$ of the original 
Gelfand-Tsetlin formulae can be described as the union of all integral 
translates of a certain finite hyperplane arrangement centered at the origin 
in $\CC^{\binom{n}{2}}$. Let $X$ be the complement of $P$, and let $A$ be the 
algebra of regular functions defined over $X$, so for every point $x \in X$
there is a generic Gelfand-Tsetlin module $V_x$. We show that these modules
form a flat family over $X$ by constructing a free $A$-module $V_A$ over which 
$\gl(n, \CC)$ acts by $A$-linear operators, such that $V_A \ot_A \CC_x \cong 
V_x$ for each $x \in X$ as $\gl(n,\CC)$-modules, see Theorem 
\ref{T:generic-GT}. Now fix two entries $(k,i)$ and $(k,j)$, let $H \subset P$ 
be the set of tableaux with $v_{k,i} = v_{k,j}$ and no other critical entries,
and let $B$ be the algebra of regular functions over $Y = X \cup H$. In 
Theorem \ref{T:V-B-U-stable} we show that the flat family over $X$ extends 
to a flat family over $Y$ by finding a full $B$-lattice $V_B \subset V_A$
which is stable by the action of $\gl(n,\CC)$. From this we immediately get 
for each $x \in X$ that $V_x \cong V_B \ot_B \CC_x$ as $\gl(n,\CC)$-modules, 
while for $y \in H$ we obtain a $\gl(n,\CC)$-module $V_y = V_B \ot_B \CC_y$. 
We finally show that $V_y$ is isomorphic to the corresponding $1$-singular 
module constructed by Futorny, Grantcharov and Ram\'\i rez, and recover the
action of the Gelfand-Tsetlin subalgebra of $U(\gl(n,\CC))$ on it. 

\section{Gelfand-Tsetlin Tableaux}
We begin by quoting some results from \cite{FGR-singular-gt}*{section 3}. We 
direct the reader to that article for proofs or references.

Fix $n \in \NN_{\geq 2}$, and set $N = \frac{n(n+1)}{2}$. A 
\emph{Gelfand-Tsetlin tableau} of size $n$ (or GT-tableau of size $n$, for 
short) is an array with $N$ complex entries of the form

\begin{tikzpicture}
\node (n1) at (-2,2.5) {$v_{n,1}$};
\node (n2) at (-1,2.5) {$v_{n,2}$};
\node (ndots) at (0,2.5) {$\cdots$};
\node (nn-1) at (1,2.5) {$v_{n,n-1}$};
\node (nn) at (2,2.5) {$v_{n,n}$};

\node (n-11) at (-1.5,2) {$v_{n-1,1}$};
\node (n-1dots) at (0,2) {$\cdots$};
\node (n-1n-11) at (1.5,2) {$v_{n-1,n-1}$};

\node (dots1) at (-1,1.625) {$\ddots$};
\node (dots2) at (0,1.5) {$\cdots$};
\node (dots3) at (1,1.625) {$\iddots$};

\node (21) at (-.5,1) {$v_{2,1}$};
\node (22) at (.5,1) {$v_{2,2}$};
\node (11) at (0,.5) {$v_{1,1}$};

\node (A) at (-3.5, 2.75) {};
\node (B) at (3.5, 2.75) {};
\node (C) at (0,0) {};

\end{tikzpicture}

The set of all GT-tableaux can be identified with $\CC^N$ by indexing the 
entries of a point $v \in \CC^N$ by $\{(k,i) \mid 1 \leq i \leq k \leq n\}$. 
We fix one particular enumeration which will be useful later. Start by 
$\lambda_{n,n}$, which we denote by $x_1$; then, looking at all entries with 
second coordinate $n-1$, we enumerate them by taking $x_2 = 
\lambda_{n-1,n-1}$, then $x_3 = \lambda_{n,n-1}$; next we take the elements 
with second coordinate $n-2$ starting by $x_4 = \lambda_{n-2,n-2}$ and moving 
in the northwest direction. Explicitly, setting $\phi(i,j) = (i-j+1) + 
\frac{(n-j)(n-j+1)}{2}$ we write $x_{\phi(i,j)} = \lambda_{i,j}$. The 
following figure shows the enumeration corresponding to $n = 3$. 

\begin{tikzpicture}
\node (31) at (0,3) {$\lambda_{3,1}$};
\node (32) at (2,3) {$\lambda_{3,2}$};
\node (33) at (4,3) {$\lambda_{3,3}$};
\node (21) at (1,2) {$\lambda_{2,1}$};
\node (22) at (3,2) {$\lambda_{2,2}$};
\node (11) at (2,1) {$\lambda_{1,1}$};

\node (31a) at (6,3) {$x_6$};
\node (32a) at (8,3) {$x_3$};
\node (33a) at (10,3) {$x_1$};
\node (21a) at (7,2) {$x_5$};
\node (22a) at (9,2) {$x_2$};
\node (11a) at (8,1) {$x_4$};

\draw (33) -- (22) -- (11)  (32) -- (21);
\draw (22) -- (32)  (11) -- (21) -- (31);

\draw (33a) -- (22a) -- (11a)  (32a) -- (21a);
\draw (22a) -- (32a)  (11a) -- (21a) -- (31a);
\end{tikzpicture}

We will denote by $T(v)$ the tableau corresponding to $v \in \CC^N$.
\begin{Definition}
\label{D:various-tableaux}
We say that a point $v \in \CC^N$, or the corresponding tableau $T(v)$, is:
\begin{itemize}
\item \emph{integral} if $v \in \ZZ^{N}$;

\item \emph{standard} if for all $1 \leq i < j \leq k \leq n$ the difference 
$v_{k,i}-v_{k-1,i} \in \ZZ_{\geq 0}$ and $v_{k-1,i}-v_{k,i+1} \in \ZZ_{>0}$;

\item \emph{generic} if for all $1 \leq i < j \leq k < n$ the difference 
$v_{k,i}-v_{k,j}$ is not in $\ZZ$;

\item \emph{singular} if it is a non-generic tableau, i.e. there is a pair of 
entries in the same row differing by an integer; if there is exactly one such 
pair then we say that the tableau is \emph{$1$-singular};

\item \emph{critical} if there are two entries in the same row which are 
equal; if there is exactly one such pair then we say the tableau is
\emph{$1$-critical}.
\end{itemize}
\end{Definition}
The enumeration of the $\lambda_{k,i}$ was chosen so all the equations defining
standard tableaux are of the form $x_t - x_s > 0$ or $x_t - x_s \geq 0$. 
Drawing a graph analogous to the one shown above with the $x_i$ as vertices, 
it is clear that we only get equations $x_t - x_s$ with $t>s$ and $x _t$ and 
$x_s$ joined by an edge.

We denote the set of standard points by $\CC^N_\std$, and the set of generic 
points by $\CC^N_\gen$. Any pair of two entries differing by an integer will
be called a \emph{singular pair}, and if they are equal they will be called
a \emph{critical pair}. 

\section{Gelfand-Tsetlin modules}
In this section, we recall the general notions of Gelfand-Tsetlin algebras and 
modules, and review the classical construction of generic Gelfand-Tsetlin 
modules due to Drozd, Futorny and Ovsienko. Our proof follows the outline 
given by V. Mazorchuk in \cite{Maz-GT-cats}*{Theorem 19}, but we believe it is 
worthwhile to include it since it allows us to introduce and become familiar 
with the main ingredients of our re-imagining of the construction of 
\cite{FGR-singular-gt}.

Recall we have fixed $n \in \NN$ and set $N = \frac{n(n+1)}{2}$. Let 
$\Lambda = \CC[\lambda_{i,j} \mid 1 \leq i \leq j \leq n]$ be a polynomial 
algebra in $N$ variables, and set $\Lambda_m = \CC[\lambda_{m,k} \mid 1 \leq k 
\leq m]$ for each $1 \leq m \leq n$. The group $S_m$ acts on $\Lambda_m$ 
permuting the variables in the obvious way. This induces an action of the 
group $G = S_n \times S_{n-1} \times \cdots \times S_1$ on $\Lambda$. Let
\[
	\gamma_{m,k} = \sum_{i = 1}^m (\lambda_{m,i}+m-1)^k 
	\prod_{i \neq j} \left( 1 - \frac{1}{\lambda_{m,i} - \lambda_{m,j}}\right).
\]
Although it is not obvious, the $\gamma_{m,k}$ are algebraically independent 
polynomials and $\Lambda_m^{S_m} = \CC[\gamma_{m,k} \mid 1 \leq k \leq m]$. 

For each $m \in \NN$ set $U_m = U(\gl(m, \CC))$. We denote by $Z_m \subset U_m$
the center of $U_m$. Also we write $U$ for $U_n$. We get a chain of inclusions 
$U_1 \subset U_2 \subset \cdots \subset U_n$ induced by the maps sending 
standard generators $E_{i,j} \in \gl(m,\CC)$ to the corresponding $E_{i,j} 
\in \gl(m+1, \CC)$. The algebra $Z_m$ is a polynomial algebra on the generators
\[
	c_{m,k} = \sum_{(i_1, \ldots, i_k) \in [m]^k} E_{i_1,i_2} E_{i_2,i_3} 
		\cdots E_{i_k, i_1} \qquad \qquad 1 \leq k \leq m,
\]
and there is an embedding $Z_m \to \Lambda_m$ given by $c_{m,k} \mapsto 
\gamma_{m,k}$. We write $\Gamma = \CC[c_{m,k} \mid 1 \leq k \leq m \leq n] 
\subset U$, which is the algebra generated by $\bigcup_{k=1}^n Z_k$. The 
$c_{m,k}$ are algebraically independent and hence $\Gamma$ is isomorphic to a 
polynomial algebra in $N$ generators. Thus $\Gamma$ is isomorphic to 
$\Lambda^G$.

\begin{Definition}[\cite{DFO-GT-modules}*{section 2.1}]
\label{D:gt-modules}
The algebra $\Gamma$ is called the \emph{Gelfand-Tsetlin subalgebra} of 
$U(\gl(n,\CC))$. A finitely generated $U$-module is called a 
\emph{Gelfand-Tsetlin module} if
\[
	M = \bigoplus_{\mathfrak m \in \Specm \Gamma} M (\mathfrak m),
\] 
where $M(\mathfrak m) = \{v \in M \mid \mathfrak m^k v = 0 \mbox{ for some } k 
\geq 0\}$.
\end{Definition}

\subsection{Irreducible representations of $\gl(n,\CC)$}
For each $1 \leq i \leq k \leq n$ set
\begin{align*}
p_{k,i}^\pm(\lambda) 
	&= \prod_{j = 1}^{k\pm1}(\lambda_{k,i} -\lambda_{k\pm1,j}); &
q_{k,i}(\lambda)
	&= \prod_{j \neq i} (\lambda_{k,i} - \lambda_{k,j}). \\
|\lambda|_k &= \lambda_{k,1} + \lambda_{k,2} + \cdots \lambda_{k,k}.
\end{align*}
The following is a classical result due to Gelfand and Tsetlin. 
\begin{Theorem}[\cite{GT-modules}]
\label{GT}
Let $\lambda = (\lambda_1, \ldots, \lambda_n)$ be a dominant integral weight 
of $\gl(n,\CC)$, and let 
\begin{align*}
V(\lambda) 
	&= \langle T(v) \mid v \in \CC^N_{\std} 
		\mbox{ and } v_{n,1} = \lambda_1, v_{n,2} = \lambda_2 - 1, \ldots, 
		v_{n,n} = \lambda_{n} - n +1
	\rangle_\CC
\end{align*}
(by convention, if $v$ is non-standard then $T(v) = 0$ in $V(\lambda)$).
The vector space $V(\lambda)$ can be endowed with a $\gl(n,\CC)$-module 
structure, with the action of the canonical generators given by
\begin{align*}
E_{k,k+1} T(v) 
	&= - \sum_{i=1}^k \frac{p^+_{k,i}(v)}{q_{k,i}(v)} T(v + \delta^{k,i}), \\
E_{k+1,k} T(v) 
	&= \sum_{i=1}^k \frac{p^-_{k,i}(v)}{q_{k,i}(v)} T(v - \delta^{k,i}), \\
E_{k,k} T(v)
	&= (|v|_k - |v|_{k-1} + k -1) T(v),
\end{align*}
where $\delta^{k,i}$ is the element of $\ZZ^N$ with a $1$ in position $(k,i)$
and $0$'s elsewhere. Furthermore, for each $1 \leq k \leq m \leq n$ we have 
$c_{m,k} T(v) = \gamma_{m,k}(v) T(v)$. 
\end{Theorem}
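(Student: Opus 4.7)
The plan is to verify directly that the given formulas define a Lie algebra homomorphism $\gl(n,\CC) \to \End(V(\lambda))$ by checking the Chevalley relations on the generators $E_{k,k\pm 1}$ and $E_{k,k}$, and then to deduce the formula for the central elements $c_{m,k}$. As a preliminary step I would confirm well-definedness: if $v$ is standard, the chain $v_{k,i} \geq v_{k-1,i} > v_{k,i+1}$ shows that row $k$ is strictly decreasing, so each $q_{k,i}(v)$ is nonzero and the rational coefficients make sense. One also has to verify that the convention $T(v) = 0$ for non-standard $v$ is preserved by the action: whenever $v + \delta^{k,i}$ (resp.\ $v - \delta^{k,i}$) fails to be standard the corresponding numerator $p^+_{k,i}(v)$ (resp.\ $p^-_{k,i}(v)$) already vanishes, so no spurious tableaux appear.

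The easy relations come next. Since $E_{k,k}$ acts diagonally and $E_{l,l\pm 1}$ shifts exactly one entry of row $l$, the identity $[E_{k,k}, E_{l,l\pm 1}] = (\delta_{k,l} - \delta_{k,l\pm 1})E_{l,l\pm 1}$ is immediate by comparing the eigenvalue of $E_{k,k}$ before and after the shift. The Serre-type relations $\mathrm{ad}(E_{k,k+1})^2(E_{k+1,k+2}) = 0$ and its dual reduce to similar bookkeeping, using that $E_{k,k+1}$ and $E_{k+1,k+2}$ shift entries in different rows, so the repeated compositions produce compound shifts whose coefficients cancel when antisymmetrized.

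The heart of the proof is the relation $[E_{k,k+1}, E_{l+1,l}] = \delta_{k,l}(E_{k,k} - E_{k+1,k+1})$, specifically in the diagonal case $k = l$. Expanding both compositions yields a double sum $\sum_{i,j}$ of products of $p^\pm$ and $q$ factors evaluated at $v$ and at shifted arguments $v \pm \delta^{k,i}$ and $v \pm \delta^{k+1,j}$. The off-diagonal contributions produce net shifts $\delta^{k,i} - \delta^{k,j}$ with $i \neq j$ and cancel in pairs by a skew-symmetry argument in the $p,q$ factors; the diagonal contributions then collapse to a scalar multiple of $T(v)$ after applying the Lagrange identity $\sum_i f(v_{k,i})/\prod_{j \neq i}(v_{k,i} - v_{k,j})$, which extracts the leading coefficient of $f$ when the degree matches the number of variables, applied to the combination of factors coming from rows $k-1$, $k$ and $k+1$.

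Managing this rational-function identity is where I expect the main obstacle. The cleanest packaging is to interpret the diagonal sum as a sum of residues of a single auxiliary rational function in one variable, which makes the Lagrange step transparent and isolates how rows $k \pm 1$ enter. Once the Lie algebra relations are in place, the final assertion $c_{m,k} T(v) = \gamma_{m,k}(v) T(v)$ follows by restricting to $\gl(m,\CC)$: the subspace spanned by tableaux with a fixed row $m$ is a $\gl(m,\CC)$-submodule isomorphic to the irreducible of highest weight recorded by that row, on which $c_{m,k}$ acts by its central character, and the embedding $Z_m \hookrightarrow \Lambda_m$ sending $c_{m,k} \mapsto \gamma_{m,k}$ is precisely the Harish-Chandra isomorphism evaluated at the row entries.
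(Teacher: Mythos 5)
The paper does not prove Theorem \ref{GT}; it cites it as a classical result from \cite{GT-modules} and moves on. So there is no ``paper's proof'' to compare against, and your proposal must be judged on its own.

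There is a genuine gap in your ``well-definedness'' preliminary. You claim that whenever $v+\delta^{k,i}$ fails to be standard, $p^+_{k,i}(v)$ already vanishes, so ``no spurious tableaux appear.'' This is false. The polynomial $p^+_{k,i}(v)=\prod_{j=1}^{k+1}(v_{k,i}-v_{k+1,j})$ only sees row $k+1$, so it detects failure of the interlacing between rows $k$ and $k+1$ (namely $v_{k+1,i}=v_{k,i}$). But raising $v_{k,i}$ can also break the interlacing between rows $k-1$ and $k$, when $v_{k-1,i-1}=v_{k,i}+1$, and in that case $p^+_{k,i}(v)$ has no vanishing factor. Concretely, take $n=3$, $\lambda=(1,0,-1)$, so the top row is $(1,-1,-3)$, and let $v$ have $(v_{2,1},v_{2,2},v_{1,1})=(1,-2,-1)$, which is standard. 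Then $v+\delta^{2,2}$ has $v_{1,1}=v_{2,2}=-1$, so it is not standard, yet
\[
p^+_{2,2}(v)=(-2-1)(-2+1)(-2+3)=3\neq 0,\qquad
-\frac{p^+_{2,2}(v)}{q_{2,2}(v)}=-\frac{3}{-3}=1.
\]
So $E_{2,3}T(v)$ really does produce a non-standard tableau with a nonzero coefficient; the convention $T(w)=0$ for non-standard $w$ is doing real work, not just absorbing terms that are already zero. The symmetric issue occurs for $E_{k+1,k}$: $p^-_{k,i}$ only sees row $k-1$, while lowering $v_{k,i}$ can violate the interlacing with row $k+1$.

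This is not a cosmetic point but the crux of the theorem. Once you admit that non-standard tableaux appear with nonzero coefficients, verifying the commutation relations is no longer a pure rational-function identity as you describe in the ``heart of the proof'' paragraph: when you expand, say, $E_{k,k+1}E_{k+1,k}T(v)$, the intermediate tableau $v-\delta^{k,j}$ may be non-standard and is killed by convention, so the formal cancellation against $E_{k+1,k}E_{k,k+1}T(v)$ must be re-examined term by term on these boundary cases. Either one must show that the offending terms are matched by equal contributions from the other composition (which is a delicate combinatorial analysis of the polytope of standard tableaux), or one must sidestep the issue entirely, e.g.\ by constructing the basis via the $\gl(n-1)$-branching rule and lowering operators, so that non-standard tableaux simply never arise as formal symbols. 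Your outline for the Lagrange/residue identity and for the $c_{m,k}$ eigenvalue via the Harish-Chandra map are reasonable and in the right spirit, but as written the proof would fail at the well-definedness step before it got there.
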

It is immediate to check that with this structure, $V(\lambda)$ is an 
irreducible finite dimensional representation of maximal weight $\lambda$, so 
this theorem provides an explicit presentation of all finite dimensional 
simple $\gl(n,\CC)$-modules. The formulas for the action of the generators of 
$\gl(n,\CC)$ in the previous theorem are known as the \emph{Gelfand-Tsetlin 
formulas}.

\subsection{Generic Gelfand-Tsetlin Modules}
Let $\ZZ_0^N \subset \ZZ^N$ be the set of vectors of $\CC^N$ with integral 
entries such that $v_{n,i} = 0$ for all $1 \leq i \leq n$. For $v \in \CC^N$
set 
\[
	V(T(v)) = \langle T(v+z) \mid z \in \ZZ^N_0 \rangle_\CC.
\]
We now quote \cite{DFO-GT-modules}*{section 2.3}.
\begin{Theorem}
\label{T:generic-GT}
Suppose $v \in \CC_\gen^N$. Then the vector space $V(T(v))$ can be endowed 
with the structure of a $\gl(n,\CC)$-module with the action of the canonical 
generators given by the Gelfand-Tsetlin formulas. Furthermore, for each $1 
\leq k \leq m \leq n$ we have $c_{m,k} T(w) = \gamma_{m,k}(w) T(w)$, so 
$V(T(v))$ is a Gelfand-Tsetlin module.
\end{Theorem}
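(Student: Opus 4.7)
The plan is to follow the Mazorchuk-style outline cited in the paper and prove the theorem by Zariski density, reducing the case of a generic $v$ to the finite-dimensional representations $V(\mu)$ provided by Theorem \ref{GT}. First I would check that the GT-formulas actually define endomorphisms of $V(T(v))$ when $v \in \CC^N_\gen$: by genericity the entries $v_{k,1}, \ldots, v_{k,k}$ have pairwise non-integral differences, so the denominators $q_{k,i}(v+z)$ are nonzero for every $z \in \ZZ^N_0$ and the formulas give well-defined linear maps on $V(T(v))$.

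The heart of the proof is to verify the defining relations of $U(\gl(n,\CC))$, namely the commutators $[E_{ij}, E_{kl}]$ and the claimed action $c_{m,k} T(w) = \gamma_{m,k}(w) T(w)$. Applying such a relation to a basis vector $T(v+z)$ and expanding via the formulas yields a finite sum $\sum_\xi r_\xi(v+z)\, T(v+z+\xi)$, where each $r_\xi$ is a rational function whose denominator is a product of factors of the form $\lambda_{k,i} - \lambda_{k,j}$. The relation is therefore equivalent to the vanishing of each $r_\xi$ in the field of fractions of $\Lambda$, i.e.\ to a polynomial identity after clearing denominators.

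To establish each such polynomial identity it suffices to check it on a Zariski dense subset of $\CC^N$, and I would produce such a subset using the modules $V(\mu)$. Since only finitely many shifts appear in a given $r_\xi$, one may choose a dominant integral $\mu$ whose gaps $\mu_i - \mu_{i+1}$ are much larger than the norms of these shifts, together with a standard integer tableau $v^\ast$ with top row $(\mu_1, \mu_2 - 1, \ldots, \mu_n - n + 1)$ whose interior entries lie far from the boundary of the standardness region. For such a choice every tableau $v^\ast + z + \xi$ occurring in the expansion remains standard, the action on $V(\mu)$ then agrees pointwise with the universal formula (no non-standard terms are silently set to zero), and the relation holds in $V(\mu)$ by Theorem \ref{GT}. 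This forces $r_\xi(v^\ast) = 0$, and letting $\mu$ and $v^\ast$ vary produces a Zariski dense set of such points, hence $r_\xi \equiv 0$. The statement $c_{m,k} T(w) = \gamma_{m,k}(w) T(w)$ follows from the same argument applied to the function $c_{m,k} T(w) - \gamma_{m,k}(w) T(w)$, which vanishes in every $V(\mu)$ by Theorem \ref{GT}.

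The main obstacle I anticipate is this density step, where one has to bound the support of every $r_\xi$, choose $\mu$ with gaps large enough that every $v^\ast + z + \xi$ in that support stays standard, and verify that the resulting integer tableaux are truly Zariski dense in $\CC^N$. Once this quantitative meaning of ``large gaps'' and ``sufficiently interior'' is pinned down, the rest is routine and the whole statement for generic $v$ is reduced to the classical Gelfand--Tsetlin theorem.
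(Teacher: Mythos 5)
Your approach is essentially the paper's: verify the defining relations of $U(\gl(n,\CC))$ as rational-function identities by evaluating at integral standard tableaux, where Theorem~\ref{GT} applies, then conclude by a density argument. The ``main obstacle'' you flag at the end is, however, the actual content of the proof, and the paper settles it as Lemma~\ref{L:rational-zeroes}: for any finite set of shifts $W$, the intersection $S_W = \bigcap_{w \in W} (\CC^N_\std - w)$ contains, after an explicit affine change of coordinates, the whole octant $\NN^N$, and a rational function regular on $S_W$ and vanishing there must vanish identically. This is a cleaner device than your proposal of choosing a dominant weight $\mu$ with sufficiently large gaps for each relation and then arguing that the resulting integer tableaux are Zariski dense: it produces a single explicit lattice of zeros, independent of $\mu$, and the ``no poles on $S_W$'' hypothesis is exactly what lets you clear denominators safely (standard tableaux have strictly decreasing rows, so the $q_{k,i}$ cannot vanish there). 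One further organizational difference worth noting: the paper does not argue module-by-module for each generic $v$, but first builds a single free module $V_A$ over the ring $A$ of regular functions on $\CC^N_\gen$ (Proposition~\ref{P:universal-generic-GT-module}), proves the relations once at that universal level, and then realizes each $V(T(v))$ as the fiber $V_A \otimes_A \CC_v$. This packaging is not just aesthetics --- it is precisely what allows the subsequent extension of the family to the $1$-singular locus in Theorem~\ref{T:V-B-U-stable}.
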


We will now reprove Theorem \ref{T:generic-GT}. We begin with a technical 
lemma. Recall that we have identified $\CC^N$ with $\Specm \Lambda$, so
we identify rational functions over $\CC^N$ with elements of the field
$\CC(\lambda_{k,i} \mid 1 \leq i \leq k \leq n)$.
\begin{Lemma}
\label{L:rational-zeroes}
Let $W \subset \ZZ^N_0$ be a nonempty finite set and let $S_W = \bigcap_{w \in 
W} \CC^N_\std - w$. Let $F \in \CC(\lambda_{k,i} \mid 1 \leq i \leq k \leq n)$ 
be a rational function without poles in $S_W$. If $F(v) = 0$ for all $v \in 
S_W$, then $F = 0$.
\end{Lemma}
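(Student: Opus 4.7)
The plan is to reduce the statement to the Zariski density of $S_W$ in $\CC^N$. Once density is in hand the lemma is essentially formal: write $F = P/Q$ with $P, Q \in \Lambda$ coprime; the no-poles hypothesis forces $Q$ to be nonvanishing on $S_W$, so $F|_{S_W} = 0$ implies $P|_{S_W} = 0$, and Zariski density then gives $P = 0$, hence $F = 0$.

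To prove the density I would first unpack the definition of $S_W$. A point $v$ lies in $\CC^N_\std - w$ precisely when every interlacing difference of $v + w$ is a non-negative (resp.\ positive) integer. Since $w$ has integer entries, this is equivalent to the interlacing differences of $v$ themselves being integers bounded below by certain integer constants depending on $w$. Intersecting over the finite set $W$ produces integer bounds $M^+_{k,i},\, M^-_{k,i} \in \ZZ$ such that $v \in S_W$ iff $v_{k,i} - v_{k-1,i} \in M^+_{k,i} + \NN$ and $v_{k-1,i} - v_{k,i+1} \in M^-_{k,i} + \NN$ for all relevant indices. In particular, taking all interlacing differences of $v$ to be sufficiently large integers already shows $S_W \neq \emptyset$.

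Next I would change coordinates. The $N - 1$ interlacing differences $d_1, \ldots, d_{N-1}$ along the edges of the tableau graph, together with $v_{n,n}$, give a $\CC$-linear isomorphism $\CC^N \cong \CC \times \CC^{N-1}$ under which $S_W$ corresponds to $\CC \times T$ with $T \subset \ZZ^{N-1}$ a translate of the orthant $\NN^{N-1}$.

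The key step is then Zariski density of $\CC \times T$ in $\CC \times \CC^{N-1}$. Given a polynomial $P$ vanishing there, expand $P = \sum_{k \geq 0} P_k(d)\, v_{n,n}^k$ with $P_k \in \CC[d_1, \ldots, d_{N-1}]$. For each fixed $d \in T$ the evaluation is a polynomial in $v_{n,n}$ vanishing on all of $\CC$, so $P_k(d) = 0$ for every $k$; and a polynomial vanishing on the shifted orthant $T$ is identically zero by induction on the number of variables, so all $P_k = 0$ and $P = 0$. The main obstacle in the plan is the combinatorial bookkeeping involved in passing from the $W$-indexed standardness inequalities to the clean description of $S_W$ as $\CC \times T$ in the adapted coordinates; once this is in place the density argument and the deduction of the lemma follow formally.
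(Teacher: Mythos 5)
Your high-level plan is sound: write $F = P/Q$ with $P,Q$ coprime, note the no-poles hypothesis makes $Q$ nonvanishing on $S_W$, reduce to showing $P$ vanishing on $S_W$ forces $P=0$, and prove this density by exhibiting a large combinatorial grid inside $S_W$. The unpacking of $S_W$ via lower bounds $M^\pm_{k,i}$ on the interlacing differences is also correct. But the change-of-coordinates step has a genuine gap.

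You claim that the interlacing differences along the edges of the tableau graph are $N-1$ in number and, together with $v_{n,n}$, give a linear isomorphism under which $S_W$ becomes $\CC\times T$ with $T$ a translated orthant $\NN^{N-1}$. This is false for $n\geq 3$. The tableau graph has $N=\binom{n+1}{2}$ vertices but $n(n-1)$ interlacing edges, and $n(n-1) > N-1$ as soon as $n\geq 3$, so the interlacing differences are \emph{not} linearly independent; the graph contains $\binom{n-1}{2}$ independent cycles. If you select a spanning tree of $N-1$ differences to serve as coordinates, the remaining cycle conditions translate into genuine extra linear inequalities on the tree coordinates, and $T$ is then the integer points of a polyhedron that is \emph{not} an orthant. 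Concretely, for $n=3$ (in the paper's $x$-coordinates $x_1,\dots,x_6$) the conditions $x_4-x_2>0$, $x_5-x_4\geq 0$, $x_5-x_3>0$, $x_3-x_2\geq 0$ run around a $4$-cycle; taking $d_1=x_4-x_2$, $d_2=x_5-x_4$, $d_3=x_5-x_3$ as three of the tree coordinates, the fourth inequality becomes $d_1+d_2-d_3\geq 0$, which is not implied by separate lower bounds on $d_1,d_2,d_3$. So your inductive ``a polynomial vanishing on a translated orthant is zero'' argument cannot be applied to $T$ as stated, and your description of $S_W$ is simply not correct.

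The statement is still true, and the fix is essentially what the paper does: rather than trying to show $S_W$ \emph{equals} $\CC\times(\text{orthant})$, it suffices to exhibit an affine copy of $\NN^N$ \emph{inside} $S_W$. The paper picks $v\in S_W$ and the ``staircase'' vectors $w_s$ with $x_t(w_s)=1$ for $t\geq s$ and $0$ otherwise; these satisfy $x_t(w_s)-x_{s'}(w_s)\geq 0$ for all $t>s'$, hence lie in the recession cone of \emph{every} interlacing inequality (in particular they respect all cycle conditions), and they form a $\ZZ$-basis of $\ZZ^N$. The affine map $r\mapsto v+\sum_s r_s w_s$ therefore injects $\NN^N$ into $S_W$, and precomposing $F$ with this map reduces to the orthant case. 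If you replace your coordinate change with this affine embedding, the rest of your argument goes through and coincides with the paper's proof.
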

\begin{proof}
Explicitly, $v$ lies in $S_W$ if and only if
\begin{align*}
v_{k,i} - v_{k-1,i} + \min_{w \in W}\{w_{k,i} - w_{k-1,i}, 0\} + 1 
	&\in \ZZ_{> 0}, \\
v_{k-1,i} - v_{k,i+1} + \min_{w \in W}\{w_{k-1,i} - w_{k,i+1}\}
	&\in \ZZ_{>0},
\end{align*}
for all $1 \leq i \leq n$. With the enumeration fixed in the previous section,
and writing $x_{\phi(i,j)}$ for $\lambda_{i,j}$, these inequalities are all of 
the form $x_t - x_s - r_{t,s} \in \ZZ_{>0}$ with $t > s$ and $r_{t,s} \in \ZZ$ 
(notice that not all pairs $(t,s)$ appear). Thus fixing $x_1(v) = 0$, we can 
build recursively an element $v \in S_W$, in particular $S_W$ is not empty.

For each $1 \leq s \leq n$ let $w_s \in \ZZ^N$ be such that $x_t(w_s) = 1$ if 
$t \geq s$, and $x_t(w_s) = 0$ if $t < s$. Now $x \in S_W$ implies $x + r w_s 
\in S_W$ for all $r \in \NN$, and applying this to the element $v \in S_W$ we 
found before, we get that $v + \sum_{s=1}^N v + r_s w_s \in S_W$ whenever $r_s 
\in \NN$, and each of these points is a zero of $F$.

For each $1 \leq s \leq n$ let $e_s \in \ZZ^N$ be such that $x_s(e_s) = 1$
and $x_t(e_s) = 0$, and let $C: \CC^N \to \CC^N$ be the affine transformation
defined by $C(0) = v$, $C(e_s) = w_s$. Then $F \circ C$ is a rational function
without poles in $\NN^N$ and furthermore $F \circ C(\NN^n) = 0$. This implies
that $F \circ C = 0$, which in turn implies $F = 0$.
\end{proof}

Let $A$ be the algebra of regular functions defined over $\CC^N_\gen$. The 
algebra $A$ contains all the rational functions appearing in the 
Gelfand-Tsetlin formulas. Consider the action of $\ZZ^N$ over $\CC^N$, given 
by $v^z = v+z$. This induces an action on $A$, given by $(z \cdot f)(v) = 
f(v^z)$. We will sometimes write $f(\lambda^z)$ for $(z \cdot f)$. 

\begin{Proposition}
\label{P:universal-generic-GT-module}
Let $V_A$ be the free $A$-module with basis $\{T(z) \mid z \in \ZZ^N_0\}$.
The $A$-module $V_A$ can be endowed with the structure of a $U$-module 
with the action of the canonical generators given by
\begin{align*}
E_{k,k+1} T(z) 
	&= - \sum_{i=1}^k \frac{p^+_{k,i}(\lambda^z)}{q_{k,i}(\lambda^z)} 
		T(z + \delta^{k,i}); \\
E_{k+1,k} T(z) 
	&= \sum_{i=1}^k \frac{p^-_{k,i}(\lambda^z)}{q_{k,i}(\lambda^z)} 
		T(z - \delta^{k,i}); \\
E_{k,k} T(z)
	&= (|\lambda^z|_k - |\lambda^z|_{k-1} + k -1) T(z).
\end{align*}
Furthermore, for each $1 \leq k \leq m \leq n$, we have $c_{m,k} T(z) = 
\gamma_{m,k}(\lambda^z) T(z)$.
\end{Proposition}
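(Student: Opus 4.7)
The plan is to verify that the proposed formulas define a $U$-module structure on $V_A$ by reducing each defining relation of $U$ to a rational-function identity and then invoking Lemma \ref{L:rational-zeroes}. Fix a Chevalley-Serre presentation of $U$ with generators $\{E_{k,k}\}_{k=1}^n \cup \{E_{k,k+1}, E_{k+1,k}\}_{k=1}^{n-1}$. Since the proposition specifies an $A$-linear action of every generator on every basis element $T(z)$, it suffices to show that each defining relation $R$ annihilates $T(z)$ for every $z \in \ZZ_0^N$.

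Fix such an $R$ and $z$ and expand $R \cdot T(z)$ through the proposed formulas to obtain
\[
R \cdot T(z) \;=\; \sum_{w \in W_R} F_w(\lambda^z)\, T(z+w),
\]
where $W_R \subset \ZZ_0^N$ is a finite set of shifts determined by $R$ and each $F_w \in \CC(\lambda)$ is built from iterated GT coefficients. Since $V_A$ is free over $A$ on $\{T(z)\}$, and $\CC^N_\gen$ is $\ZZ^N$-invariant and Zariski dense in $\CC^N$, the vanishing of $R \cdot T(z)$ in $V_A$ reduces to showing $F_w = 0$ as a rational function. To this end I apply Lemma \ref{L:rational-zeroes} with $W = W_R \cup \{0\}$: every $u \in S_W$ is then standard, its row entries are strictly decreasing, the denominators $q_{k,i}$ of the GT formulas are nonvanishing on $S_W$, and each $F_w$ is regular there.

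For $u \in S_W \cap \ZZ^N$, Theorem \ref{GT} provides a finite-dimensional $U$-module whose basis is indexed by the standard integer tableaux with top row $u_{n,\bullet}$, and in particular each $T(u+w)$ for $w \in W$ is a distinct basis vector. Specializing the variables $\lambda_{k,i}$ of the GT formulas at $u_{k,i}$ recovers the action on this module, so $R \cdot T(u) = 0$ there gives $\sum_{w \in W} F_w(u)\, T(u+w) = 0$, and linear independence forces $F_w(u) = 0$. The proof of Lemma \ref{L:rational-zeroes} uses only integer points of $S_W$, so this is enough to conclude $F_w = 0$.

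The identity $c_{m,k} T(z) = \gamma_{m,k}(\lambda^z) T(z)$ is proved by the same scheme: expand $c_{m,k} T(z)$ through the formulas as $\sum_w G_w(\lambda^z) T(z+w)$ and verify the rational identities $G_0 = \gamma_{m,k}$ and $G_w = 0$ for $w \neq 0$ at integer standard points, where the last clause of Theorem \ref{GT} applies. The main obstacle is organizational rather than computational: one must fix a complete Serre-type presentation of $U$ (since the proposition only specifies an action of the Chevalley generators), enumerate the finite shift sets $W_R$ arising from each relation and from each $c_{m,k}$, and confirm that no defining relation has been overlooked.
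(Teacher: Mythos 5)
Your argument is correct and follows essentially the same route as the paper: reduce the claim, via Lemma~\ref{L:rational-zeroes}, to the vanishing of the rational coefficient functions on (integer points of) $S_W$, and supply that vanishing from Theorem~\ref{GT} applied to the finite-dimensional modules $V(\lambda)$ for integral dominant $\lambda$. The only presentational difference is that you verify a fixed Serre-type set of defining relations, whereas the paper checks directly that any element of the kernel of the map from the free algebra on the Chevalley generators to $U$ acts by zero; these amount to the same thing, with the paper's phrasing sidestepping the need to enumerate a complete presentation.
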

\begin{proof}
Let $F$ be the free $\CC$-algebra generated by $X_{k,k+1}, X_{k+1,k}$ for 
$1 \leq k < n$, and $X_{k,k}$ for $1 \leq k \leq n$; there is an obvious map
$\phi: F \to U$. Replacing $E$ by $X$ in the formulas in the statement, we 
certainly get an $F$-module structure on $V_A$, so there is an algebra map $F 
\to \End_A V_A$, and we must show that this map factors through $U$.

Let $a \in F$. Then for each $w \in \ZZ^N_0$ there exists a rational function 
$f_{a,w} \in A$ such that
\[
	aT(z) = \sum_{w \in \ZZ^N_0} f_{a,w}(\lambda^z) T(z+w)
\]
for all $z \in \ZZ^N_0$. The sum is of course over a finite subset of 
$\ZZ^N_0$, which we call $W$. By Lemma \ref{L:rational-zeroes} the rational 
function $f_{a,w}$ is determined by its values on $S_W = \bigcap_{w \in W} 
\CC_\std^N-w$.

Fix $v \in S_W$ and let $\lambda = (v_{n,1}, v_{n,1} + 1, \ldots, v_{n,n} + 
n -1)$. Notice that $\lambda$ is the $n$-th row of $v+x$ for all $x \in S_W$ 
since these are all elements of $\ZZ^N_0$, and this in turn implies that 
$\lambda$ is an integral dominant weight of $\gl(n,\CC)$ and we can
consider the representation $V(\lambda)$ as defined in Theorem \ref{GT}. By 
construction the set $\{T(v+w) \mid w \in W\}$ consists of nonzero linearly 
independent elements of $V(\lambda)$, and by construction
\[
	\phi(a) T(v) = \sum_{w \in W} f_{a,w}(v) T(v+w).
\]
Thus if $\phi(a) = 0$ then $f_{a,w}(v) = 0$ for all $v \in S_W$, so $f_a(-,w) 
= 0$. This implies that the formulas in the statement indeed define a 
$U$-module structure on $V_A$. Furthermore, if $\phi(a) = c_{m,k}$ for
some $k$ and $m$ then $f_{a,w}(v) = 0$ for all $w \neq 0$ and $f_{a,0}(v) = 
\gamma_{m,k}(v)$, so again by Lemma \ref{L:rational-zeroes} the action of the
$c_{m,k}$ is the one given in the statement.
\end{proof}

\begin{proof}[Proof of Theorem \ref{T:generic-GT}]
\label{GT-generic-proof}
If $v \in \CC_\gen^N$ then the map $f \in A \mapsto f(v) \in \CC$ is well
defined, and induces a one-dimensional representation of $A$ which we denote
$\CC_v$. Now $V_A \ot_A \CC_v$ is a $U$-module, with the action of $U$ induced
by its action on $V_A$, and furthermore it is isomorphic to $V(T(v))$ as 
$\CC$-vector space through the assignation $1 \ot_A T(z) \mapsto T(v+z)$. 
Thus $V(T(v))$ gets a $U$-module structure, which by construction is given by
the Gelfand-Tsetlin formulas. 
\end{proof}

\section{The construction of $1$-singular Gelfand-Tsetlin modules}
We will give a new construction of Futorny, Grantcharov and Ramírez's 
$1$-singular Gelfand-Tsetlin modules, which will follow the same spirit as the 
construction of generic Gelfand-Tsetlin modules presented in the previous 
section. Fix $k,i,j \in \NN$ with $1 \leq i < j \leq k <n$. From now a 
$1$-critical point, will be a critical point $v$ whose only critical pair is 
$v_{k,i}, v_{k,j}$. We set $x = \lambda_{k,i}, y = \lambda_{k,j}$. 

Let $B \subset A$ be the algebra of consisting of functions in $A$ without 
poles in the set of $1$-critical points; for example $1/(x-y)$ lies in $A$
but not in $B$. The idea behind our construction is finding a $B$-lattice $V_B 
\subset V_A$ which is stable by the action of $U$. Once we have found this 
lattice, the construction of the $U$-module associated to a $1$-singular point 
$v$ can go as before: take $\CC_v$ to be the $1$-dimensional $B$-module 
associated to $v$ and set $V(T(v)) = V_B \ot_B \CC_v$, which inherits its 
$U$-module structure from $V_B$.

Let $I = \{(r,s) \mid 1 \leq s \leq r \leq n\}$, and let $\tau: I \to I$ be 
the involution that interchanges $(k,i)$ and $(k,j)$, while leaving all other
elements of $I$ fixed. By abuse of notation, we also denote by $\tau$ the 
linear transformation $\tau: \CC^N \to \CC^N$ given by $\tau(\delta^{r,s}) =
\delta^{\tau(r,s)}$ for all $(r,s) \in I$, and also the algebra automorphism
$\tau: B \to B$ induced by the assignation $\tau(\lambda_{r,s}) = 
\lambda_{\tau(r,s)}$. In each case $\tau^2$ is the identity map.

\begin{Definition}
\label{D:V-B}
For each $z \in \ZZ^N_0$ we define 
\begin{align*}
S(z)
	&= \frac{T(z) + T(\tau(z))}{2},
&A(z)
	&= \frac{T(z) - T(\tau(z))}{2(x-y)}.
\end{align*}
We define $V_B \subset A$ to be the $B$-module generated by $\{S(z), A(z) \mid
z \in \ZZ^N_0\}$.
\end{Definition}

As stated above, we will show that $V_B$ is stable by the action of $U$.
Notice that $T(z) = S(z) + (x-y)A(z)$, so $T(z) \in V_B$ for any $z \in 
\ZZ^N_0$. Notice also that $S(\tau(z)) = S(z)$ and $A(\tau(z)) = - A(z)$; 
in particular if $z = \tau(z)$ then $S(z) = T(z)$ and $A(z) = 0$. It follows 
that $a T(z) + b T(\tau(z)) = (a+b) S(z) + (a-b)(x-y)A(z)$ for all $a, b \in 
A$.

Let $D: B \to B$ be the differential operator $\frac{1}{2} \left(
\frac{\partial}{\partial x} - \frac{\partial}{\partial y} \right)$. The 
following lemma can be verified by direct calculations. 
\begin{Lemma}
\label{L:helpful}
Let $f \in B$.
\begin{enumerate}
\item 
\label{derivative}
The rational function $\frac{f - \tau \cdot f}{x-y}$ lies in $B$, and is equal
to $2 D(f)$.

\item 
\label{action1}
For each $z \in \ZZ^N$ we have $\tau \cdot f(\lambda^z) = (\tau \cdot f)
(\lambda^{\tau(z)})$. 

\item 
\label{action2}
We have $\tau \cdot p_{t,s}^\pm = p_{\tau(t,s)}^\pm$ and $\tau \cdot 
q_{t,s} = q_{\tau(t,s)}$.
\end{enumerate}
\end{Lemma}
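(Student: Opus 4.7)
The plan is to verify each of the three claims by direct calculation, treating them independently. Parts (2) and (3) are essentially bookkeeping once the two incarnations of $\tau$ (as an algebra automorphism of $B$ and as a linear transformation of $\CC^N$) are unpacked; the substantive content is concentrated in part (1), where one must establish both a regularity statement in $B$ and an identification with the differential operator $2D$.

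For part (1), I would first show that $(f - \tau \cdot f)/(x-y)$ lies in $B$, i.e.\ is regular on $Y = X \cup H$. Away from the hyperplane $\{x = y\}$ this is immediate, since $\tau$ preserves $Y$ so $\tau \cdot f$ is regular wherever $f$ is, while $x - y$ is invertible there. At a $1$-critical point $v$ I would pass to the variables $u = (x+y)/2$ and $w = x - y$, in which $\tau$ acts by $u \mapsto u$, $w \mapsto -w$ and fixes the remaining coordinates. Writing $f$ locally as $\sum_{n \geq 0} f_n \, w^n$ with coefficients $f_n$ regular in the other variables, one has $f - \tau \cdot f = 2 \sum_{n \text{ odd}} f_n w^n$, which is visibly divisible by $w = x-y$ with regular quotient. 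For the identification with $2 D(f)$, observe that in the new coordinates $\partial_x - \partial_y = 2 \partial_w$, so $2 D(f) = 2 \sum_n n f_n w^{n-1}$; on the $1$-critical locus $\{w = 0\}$ both $(f - \tau \cdot f)/(x-y)$ and $2 D(f)$ reduce to $2 f_1$, which is the content of the asserted equality.

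For part (2), I would unpack both sides using the identity $(\tau \cdot g)(\mu) = g(\tau(\mu))$ valid for any $g \in B$ and any $\mu \in \CC^N$. Applied to $g(\lambda) = f(\lambda^z)$ the left-hand side becomes $f(\tau(\mu) + z)$, while the right-hand side reads $(\tau \cdot f)(\mu + \tau(z)) = f(\tau(\mu + \tau(z))) = f(\tau(\mu) + z)$, using $\tau^2 = \Id$. For part (3), I would apply $\tau$ factor-by-factor to $p^\pm_{t,s} = \prod_{j=1}^{t\pm 1}(\lambda_{t,s} - \lambda_{t \pm 1, j})$: the leading term $\lambda_{t,s}$ becomes $\lambda_{\tau(t,s)}$, while the remaining factors $\lambda_{t \pm 1, j}$ are affected only when $t \pm 1 = k$; in that case $j$ ranges over $\{1, \ldots, k\}$, which is preserved by the transposition swapping $i$ and $j$, so the product as a whole is unchanged. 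The same argument handles $q_{t,s}$. The main obstacle throughout is the regularity step in part (1): without the antisymmetry of $f - \tau \cdot f$ under $\tau$, division by $x - y$ would introduce a genuine pole along $H$, and the Taylor expansion in $(u, w)$ is what simultaneously justifies the regularity and produces the description of the restriction to $H$ via $D$.
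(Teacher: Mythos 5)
The paper offers no proof of this lemma, stating only that it ``can be verified by direct calculations,'' so the student's job is precisely to supply that verification; the proposal does so correctly. Parts (2) and (3) are routine unwindings of the identity $(\tau\cdot g)(\mu)=g(\tau(\mu))$ and of how $\tau$ permutes the index set, and the student handles both cleanly (including the case $t\pm 1 = k$ for $p^\pm_{t,s}$ and the case $t = k$ for $q_{t,s}$, where the transposition merely reindexes the product).

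The more interesting point is that the student correctly identifies a genuine imprecision in item (1) as stated. Literally, $\frac{f-\tau\cdot f}{x-y}$ is \emph{not} equal to $2D(f)$ in $B$: for instance with $f=\lambda_{k,i}^2=x^2$ one gets $\frac{x^2-y^2}{x-y}=x+y$ while $2D(x^2)=2x$. The asserted equality holds only on the $1$-critical locus $\{x=y\}$, i.e.\ after reduction modulo $(x-y)$ --- which is indeed how the paper uses it later when passing to $\overline V = B'\ot_B V_B$ (in the proof of Theorem~\ref{T:V-B-U-stable} only the membership statement $\frac{f-\tau\cdot f}{x-y}\in B$ is actually needed). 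The student's change of variables to $u=(x+y)/2$, $w=x-y$, where $\tau$ acts by $w\mapsto -w$, makes both the regularity and the restricted equality transparent: $f-\tau\cdot f$ is odd in $w$ and hence divisible by $w$, and both sides restrict to $2f_1$ at $w=0$. One could phrase the divisibility slightly more algebraically (the $\tau$-odd part $\tfrac12(f-\tau\cdot f)$ vanishes along the $\tau$-fixed hypersurface $H$, whose local equation in $B$ is $x-y$, hence the quotient is regular on $H$), which avoids any appeal to local power series, but the student's version is perfectly adequate for rational functions regular along $H$. Overall the proposal is correct, and its reading of item (1) is the only one compatible with the rest of the paper.
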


\begin{Theorem}
\label{T:V-B-U-stable}
The $B$-module $V_B$ is a $U$-submodule of $V_A$.
\end{Theorem}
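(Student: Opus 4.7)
The plan is to verify $U$-stability on the Chevalley generators $E_{r,r}$, $E_{r,r+1}$, $E_{r+1,r}$ applied to each $S(z)$ and $A(z)$ for $z \in \ZZ^N_0$. The diagonal generator $E_{r,r}$ acts by multiplication by $|\lambda^z|_r - |\lambda^z|_{r-1} + r-1 \in B$, so stability is immediate. For $E_{r,r+1}$ and $E_{r+1,r}$ with $r \neq k$, the rational coefficients $p^{\pm}_{r,i'}(\lambda^z)/q_{r,i'}(\lambda^z)$ are already in $B$, since no factor of $q_{r,i'}$ shifts to $x - y$. A direct rewriting using $T(z) = S(z) + (x-y)A(z)$ and $T(\tau z) = S(z) - (x-y)A(z)$, together with parts (\ref{action1})--(\ref{action2}) of Lemma \ref{L:helpful}, then expresses the image as a $B$-combination of $S$'s and $A$'s.

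The substantive case is $r = k$; write $f_{k,i'} := p^+_{k,i'}/q_{k,i'}$ (the case $E_{k+1,k}$ with $p^-$ in place of $p^+$ runs identically). The coefficients $f_{k,i}(\lambda^z)$ and $f_{k,j}(\lambda^z)$ fail to lie in $B$ precisely when $z_{k,i}=z_{k,j}$, i.e.\ $\tau z=z$; in that case each has a simple pole along $x=y$. To compute $2 E_{k,k+1}S(z) = E_{k,k+1}(T(z) + T(\tau z))$, split the sum over $i'$ into the generic part ($i' \notin \{i,j\}$, handled as above) and the singular part ($i' \in \{i,j\}$). Using $\tau \cdot f_{k,i} = f_{k,j}$ (Lemma \ref{L:helpful}(\ref{action2})), $\tau(z) + \delta^{k,i} = \tau(z + \delta^{k,j})$, and Lemma \ref{L:helpful}(\ref{action1}), the singular terms regroup into pairs of the form $a\, T(w) + (\tau \cdot a)\, T(\tau w)$ with $w \in \{z + \delta^{k,i}, z + \delta^{k,j}\}$, which by $T(w) = S(w) + (x-y)A(w)$ equals
\[
	(a + \tau a)\, S(w) + (a - \tau a)(x-y)\, A(w).
\]

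It remains to show both coefficients lie in $B$. If $\tau z \neq z$ then $a \in B$ and this is immediate. If $\tau z = z$, write $a = \tilde a/(x-y)$ with $\tilde a \in B$ (the simple pole is cleared because $(x-y)f_{k,i} \in B$); then Lemma \ref{L:helpful}(\ref{derivative}) yields
\[
	a + \tau a = \frac{\tilde a - \tau \tilde a}{x-y} = 2 D(\tilde a) \in B,
	\qquad (a - \tau a)(x-y) = \tilde a + \tau \tilde a \in B.
\]
The computation for $E_{k,k+1}A(z)$ is parallel: it is vacuous when $\tau z = z$ since then $A(z)=0$, while for $\tau z \neq z$ all relevant coefficients are already in $B$ and an analogous rearrangement (using the antisymmetric combination) works, with Lemma \ref{L:helpful}(\ref{derivative}) applied to turn the resulting $(a - \tau a)/(x-y)$ into $2 D(a) \in B$. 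The main obstacle is the singular case $\tau z = z$: one must verify that the manifest $1/(x-y)$ blow-ups of $a$ and $\tau a$ cancel at the level of $S$'s and $A$'s, and Lemma \ref{L:helpful}(\ref{derivative}), which identifies $\tau$-antisymmetric elements of $B$ as divisible by $x-y$ inside $B$, is precisely the tool that makes this cancellation transparent.
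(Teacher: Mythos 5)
Your proof is correct and follows essentially the same strategy as the paper's: reduce to the Chevalley generators, expand via $T(z)=S(z)+(x-y)A(z)$ and $T(\tau z)=S(z)-(x-y)A(z)$, and use Lemma \ref{L:helpful}(\ref{derivative}) to see that the apparent pole along $x=y$ in the resulting coefficients is spurious. One small slip in your first paragraph: for $r\neq k$ you claim that parts (\ref{action1})--(\ref{action2}) of Lemma \ref{L:helpful} suffice, but the $S(w)$-coefficient in $E_{r,r+1}A(z)$ (with $\tau z\neq z$) is $(a-\tau a)/(2(x-y))$, so part (\ref{derivative}) is needed at that step too --- exactly as in the paper's formula for $E_{t,t+1}\,A(z)$, where $D$ appears for every $t$, not just $t=k$.
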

\begin{proof}
We will check by direct inspection that $V_B$ is stable by the action of the
canonical generators of $\gl(n,\CC)$. First, since $|\lambda^z|_t = 
|\lambda^{\tau(z)}|_t$ for all $1 \leq t \leq n$, both $T(z)$ and $T(\tau(z))$
are eigenvectors of $E_{t,t}$ of eigenvalue $|\lambda^z|_t - |\lambda^z|_{t-1} 
+ t-1$, so
\begin{align*}
E_{t,t} S(z) 
	&= (|\lambda^z|_t - |\lambda^z|_{t-1} + t-1) S(z), \\
E_{t,t} A(z) 
	&= (|\lambda^z|_t - |\lambda^z|_{t-1} + t-1) A(z),
\end{align*}
and $V_B$ is not only stable by the action of the $E_{t,t}$ with $1 \leq t 
\leq n$, but also a weight module.

Let us study the action of $E_{t,t+1}$ on $S(z)$. By definition we get
\begin{align*}
E_{t,t+1} S(z)
	&= - \frac{1}{2}\sum_{s=1}^t 
			\frac{p_{t,s}^+(\lambda^z)}{q_{t,s}(\lambda^z)}T(z + \delta^{t,s}) 
			+ \frac{p_{t,s}^+(\lambda^{\tau(z)})}{q_{t,s}(\lambda^{\tau(z)})}
				T(\tau(z) + \delta^{t,s}).
\end{align*}
Now since $T(\tau(z) + \delta^{t,s}) = T(\tau(z+\delta^{\tau(t,s)}))$, we can 
rewrite this last expression as 
\begin{align*}
- \frac{1}{2}\sum_{s=1}^t 
	\frac{p_{t,s}^+(\lambda^z)}{q_{t,s}(\lambda^z)}&T(z + \delta^{t,s}) 
	+ \frac{p_{\tau(t,s)}^+(\lambda^{\tau(z)})}
		{q_{\tau(t,s)}(\lambda^{\tau(z)})} T(\tau(z + \delta^{t,s})) \\
= -\frac{1}{2} \sum_{s=1}^t
	&\left( 
		\frac{p_{t,s}^+(\lambda^z)}{q_{t,s}(\lambda^z)} + 
		\frac{p_{\tau(t,s)}^+(\lambda^{\tau(z)})}
			{q_{\tau(t,s)}(\lambda^{\tau(z)})}
	\right) S(z+\delta^{t,s}) \\
&+
	\left( 
		\frac{p_{t,s}^+(\lambda^z)}{q_{t,s}(\lambda^z)} - 
		\frac{p_{\tau(t,s)}^+(\lambda^{\tau(z)})}
			{q_{\tau(t,s)}(\lambda^{\tau(z)})}
	\right) (x-y)A(z+\delta^{t,s}).	
\end{align*}
By definition $1/q_{t,s}(\lambda^z) \in B$ unless $(t,s) \in \{(k,i), (k,j)\}$
and $z = \tau (z)$. Hence the coefficients in the equation above lie in $B$, 
except perhaps when $z = \tau(z)$ and $t = k$, in which case it is not clear 
that the coefficients of $S(z + \delta^{k,i}) = S(z+\delta^{k,j})$ and $A(z + 
\delta^{k,i}) = -A(z + \delta^{k,j})$ lie in $B$. Set $q_{k,i}^*(\lambda^z) = 
q_{k,i}(\lambda^z)/(x-y)$, so the inverse of $q_{k,i}^*(\lambda^z)$ lies in 
$B$. The coefficient of $A(z + \delta^{k,i})$ can be written as 
\begin{align*}
2\left( 
		\frac{p_{k,i}^+(\lambda^z)}{q^*_{k,i}(\lambda^z)} + 
		\frac{p_{k,j}^+(\lambda^z)}{q_{k,j}^*(\lambda^z)}
	\right) \in B,
\end{align*}
where the factor $2$ arises from the fact that $A(z + \delta^{k,i}) = -A(z + 
\delta^{k,j})$. Now considering the coefficient of $S(z+\delta^{k,i})$, we may
use Lemma \ref{L:helpful} to rewrite it as 
\[
2\left(
\frac{p^+_{k,i}(\lambda^z)}{q_{k,i}(\lambda^z)} 
+ \frac{p^+_{k,j}(\lambda^{\tau(z)})}{q_{k,j}(\lambda^{\tau(z)})}
\right)
= \frac{2}{(x-y)} \left(
	\frac{p^+_{k,i}(\lambda^z)}{q_{k,i}^*(\lambda^z)} -
		\tau \cdot \frac{p^+_{k,i}(\lambda^z)}{q^*_{k,i}(\lambda^z)}
	\right)
= 4D\left(
	\frac{p^+_{k,i}(\lambda^z)}{q_{k,i}^*(\lambda^z)}
	\right).
\]
Thus $E_{t,t+1} S(z) \in V_B$ for all $t$ and all $z$. A similar analysis 
shows that $E_{t+1,t} S(z) \in V_B$.

Following the same reasoning we find that 
\begin{align*}
E_{t,t+1} A(z)
	&= - \frac{1}{2} \sum_{s=1}^t 
		\left(
			\frac{p_{t,s}^+(\lambda^z)}{q_{t,s}(\lambda^z)} - 
			\frac{p_{\tau(t,s)}^+(\lambda^{\tau(z)})}
				{q_{\tau(t,s)}(\lambda^{\tau(z)})}
		\right) \frac{1}{x-y} S(z + \delta^{t,s}) \\
	&\qquad \qquad + \left( 
		\frac{p_{t,s}^+(\lambda^z)}{q_{t,s}(\lambda^z)} + 
		\frac{p_{\tau(t,s)}^+(\lambda^{\tau(z)})}
			{q_{\tau(t,s)}(\lambda^{\tau(z)})}
	\right) A(z+\delta^{t,s}).
\end{align*}
If $z = \tau(z)$ then $A(z) = 0$, so we only have to consider the case
$z \neq \tau(z)$, in which case the formula above equals
\begin{align*}
- \frac{1}{2} \sum_{s=1}^t 
	2D\left(
		\frac{p_{t,s}^+(\lambda^z)}{q_{t,s}(\lambda^z)}
	\right) S(z + \delta^{t,s}) + \left( 
		\frac{p_{t,s}^+(\lambda^z)}{q_{t,s}(\lambda^z)} + 
		\frac{p_{\tau(t,s)}^+(\lambda^{\tau(z)})}
			{q_{\tau(t,s)}(\lambda^{\tau(z)})}
	\right) A(z+\delta^{t,s}). 
\end{align*}
Since all coefficients lie in $B$, we see that $E_{t,t+1} A(z) \in V_B$.
A similar formula can be obtained for $E_{t+1,t} A(z) \in V_B$, which completes
the proof that $V_B$ is stable by the action of $U$.
\end{proof}

The formulas obtained in the proof of Theorem \ref{T:V-B-U-stable} can be misleading. 
For example, notice that for $t \neq k-1,k$ the coefficient of each $A(z + 
\delta^{t,s})$ in $E_{t,t+1} S(z)$ is zero. A case by case analysis reveals 
much simpler formulas, but we postponed for the sake of brevity and because 
we will add one further simplification. Since we are going to study the 
specialization of $V_B$ at $1$-critical points, we might as well consider the 
coefficients not in $B$ but in $B' = B/(x-y)$. Reduction modulo $(x-y)$ of
course simplifies the formulas. 

\begin{Definition}
Set $\overline V = B' \ot_B V_B$. We refer to $\overline V$ as the 
\emph{universal $1$-singular Gelfand-Tsetlin module}.
Given a $1$-singular point $v \in \CC^N$ with $v_{k,i} = v_{k,j}$, we denote by
$\CC_v$ the one-dimensional representation of $B'$ induced by $v$, and by 
$V(T(v))$ the $U$-module $\overline V \ot_{B'} \CC_v = V \ot_B \CC_v$.
\end{Definition}
We write $\overline S(z)$ and $\overline A(z)$ for the images of $S(z)$ and 
$A(z)$ in $\overline V$, respectively. Also, we denote by $\overline 
p^\pm_{t,s}, \overline q_{t,s}$ the images of $p^{\pm}_{(t,s)}, 
\overline q_{t,s}$ in $B'$, respectively. First, notice that if $t \neq k$ or 
$z \neq \tau (z)$ the definitions imply that
\begin{align*}
\frac{p_{t,s}^\pm(\lambda^z)}{q_{t,s}(\lambda^z)} \equiv
		\frac{p_{\tau(t,s)}^\pm(\lambda^{\tau(z)})}
			{q_{\tau(t,s)}(\lambda^{\tau(z)})} \mod (x-y).
\end{align*}
Applying reduction modulo $(x-y)$ to the formulas obtained in the proof of
Theorem \ref{T:V-B-U-stable}, we obtain that if $t \neq k$ or $z \neq 
\tau(z)$ then
\begin{align*}
E_{t,t+1} \overline S(z)
	&= -\sum_{s=1}^t \frac{\overline p^+_{t,s}(\lambda^z)}
		{\overline q_{t,s}(\lambda^z)} 
		\overline S(z+\delta^{t,s}), 
&E_{t+1,t} \overline S(z)
	&= \sum_{s=1}^t \frac{\overline p^-_{t,s}(\lambda^z)}
		{\overline q_{t,s}(\lambda^z)} 
		\overline S(z-\delta^{t,s}).
\end{align*}
Writing as before $q_{k,r}^*(\lambda^z) = q_{k,r}(\lambda^z)/(x-y)$, and 
setting $\overline D(f) = \overline{D(f)}$ we also obtain formulas
\begin{align*}
E_{k,k+1} \overline S(z)
	&= -\sum_{s \neq i,j} \frac{\overline p^+_{k,s}(\lambda^z)}
		{\overline q_{k,s}(\lambda^z)} 
		\overline S(z+\delta^{k,s}) 
	- 2 \overline D\left(
		\frac{p^+_{k,i}(\lambda^z)}{q^*_{k,i}(\lambda^z)}
	\right) \overline S(z + \delta^{k,i}) \\
	& \qquad - 2 \left(
		\frac{\overline p^+_{k,i}(\lambda^z)}{\overline q^*_{k,i}(\lambda^z)}
	\right) \overline A(z+\delta^{k,i}); \\
E_{k+1,k} \overline S(z)
	&= \sum_{s \neq i,j} \frac{\overline p^-_{k,s}(\lambda^z)}
		{\overline q_{k,s}(\lambda^z)} 
		\overline S(z-\delta^{k,s}) 
	+ 2\overline D\left(
		\frac{p^-_{k,i}(\lambda^z)}{q^*_{k,i}(\lambda^z)}
	\right) \overline S(z - \delta^{k,i}) \\
	& \qquad + 2 \left(
		\frac{\overline p^-_{k,i}(\lambda^z)}{\overline q^*_{k,i}(\lambda^z)}
	\right) \overline A(z - \delta^{k,i}).
\end{align*}
For the sake of comparison with the original construction, we point out that
this formulas can be summarized as
\begin{align*}
E_{t,t+1} S(z) 
	&= -\sum_{s=1}^t \overline D\left(
		\frac{p^+_{t,s}(\lambda^z)}{q^*_{t,s}(\lambda^z)}
	\right) \overline S(z + \delta^{t,s})
	+ \left(
		\frac{\overline p^+_{t,s}(\lambda^z)}{\overline q^*_{t,s}(\lambda^z)}
	\right) \overline A(z + \delta^{t,s}) \\
E_{t+1,t} S(z) 
	&= \sum_{s=1}^t \overline D\left(
		\frac{p^-_{t,s}(\lambda^z)}{q^*_{t,s}(\lambda^z)}
	\right) \overline S(z + \delta^{t,s})
	+ \left(
		\frac{\overline p^-_{t,s}(\lambda^z)}{\overline q^*_{t,s}(\lambda^z)}
	\right) \overline A(z + \delta^{t,s})
\end{align*}

In the case of $\overline A(z)$, reduction modulo $(x-y)$ of the formulas 
already found gives
\begin{align*}
E_{t,t+1} \overline A(z)
	&= - \sum_{s=1}^t \overline D \left(
		\frac{p_{t,s}^+(\lambda^z)}{q_{t,s}(\lambda^z)}
	\right) \overline S(z + \delta^{t,s}) 
	+ \left( 
		\frac{\overline p_{t,s}^+(\lambda^z)}{\overline q_{t,s}(\lambda^z)}
	\right) \overline A(z+\delta^{t,s}), \\
E_{t+1,t} \overline A(z)
	&= \sum_{s=1}^t \overline D \left(
		\frac{p_{t,s}^-(\lambda^z)}{q_{t,s}(\lambda^z)}
	\right) \overline S(z - \delta^{t,s}) 
	+ \left( 
		\frac{\overline p_{t,s}^-(\lambda^z)}{\overline q_{t,s}(\lambda^z)}
	\right) \overline A(z-\delta^{t,s})
\end{align*}
With this formulas, the following theorem is clear.
\begin{Theorem}
Let $v \in \CC^N$ be a $1$-singular point with $v_{k,i} = v_{k,j}$. Then the
$U$-module $V(T(v)) = V_{B'} \ot_{B'} \CC_v$ is isomorphic to the $1$-singular
Gelfand-Tsetlin module defined in \cite{FGR-singular-gt}*{Theorem 4.11} 
through the map $1 \ot_{B'} \overline S(z) \mapsto T(v + z), 1 \ot_{B'} 
\overline A(z) \mapsto DT(v+z)$.
\end{Theorem}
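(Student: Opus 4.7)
The plan is to verify three conditions for the claimed isomorphism: the assignment is well-defined, it is a linear bijection on bases, and it intertwines the $U$-actions. The formulas at the end of the preceding section already present the $U$-action on $\overline V$ in a form that matches the shape of the FGR formulas, so most of the work is a direct comparison.

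First I would check well-definedness. The relations $\overline S(\tau z) = \overline S(z)$ and $\overline A(\tau z) = -\overline A(z)$ (with $\overline A(z) = 0$ whenever $z = \tau(z)$) are built into the definitions. On the FGR side, the pair $(T(v+z), DT(v+z))$ satisfies the very same symmetry at a $1$-critical point $v$: the symmetric vector $T(w)$ is invariant under swapping the critical entries of $w$, while the antisymmetric derivative $DT(w)$ changes sign. Restricting to a transversal for the $\tau$-action on $\ZZ^N_0$, the map is then an obvious $\CC$-linear bijection, since Theorem \ref{T:V-B-U-stable} guarantees that $V_B$ is a full $B$-lattice in $V_A$ and hence that $\overline V$ has the expected rank over $\CC$.

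The core of the proof is matching the $U$-action. For $t \neq k$ the formulas for $E_{t,t+1}\overline S(z)$ and $E_{t+1,t}\overline S(z)$ recorded above are the ordinary Gelfand-Tsetlin formulas applied with $v+z$ in place of $v$, and the analogous formulas for $\overline A(z)$ are obtained from these by applying $\overline D$ to the coefficients and leaving the antisymmetric part unchanged. This is exactly what FGR's Theorem 4.11 prescribes for the action on $T(v+z)$ and $DT(v+z)$, since the differential operator $D$ commutes with the rational coefficients in rows $t \neq k$. For $t = k$ the factor $1/(x-y)$ in $q_{k,i}(\lambda^z)$ survives the reduction modulo $(x-y)$ only via its singular part, producing precisely the $\overline D(p_{k,i}^\pm/q_{k,i}^\ast)$ summand attached to $\overline S(z \pm \delta^{k,i})$ and the pure fraction attached to $\overline A(z \pm \delta^{k,i})$; these reproduce the ``derived'' terms in FGR's formulas. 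The Cartan action and the action of $\Gamma$ match automatically because their eigenvalues on $T(z)$ are symmetric polynomials in each row and therefore descend cleanly to $\overline S(z)$ and $\overline A(z)$; this was already observed in the proof of Theorem \ref{T:V-B-U-stable}.

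The main obstacle is not computational but notational: translating between FGR's description of the $1$-singular module in \cite{FGR-singular-gt}*{Theorem 4.11} and the description here in terms of the symmetrisation/antisymmetrisation pair $(\overline S, \overline A)$. In particular, one must be careful with the degenerate locus $z = \tau(z)$, where $\overline A(z)$ vanishes and only the $\overline S$-part of the above formulas is relevant, so that the $\overline D$-terms corresponding to $s = i, j$ collapse to the expected single summand. Once this dictionary is fixed, the term-by-term comparison is routine and the proof is complete.
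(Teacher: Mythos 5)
Your approach matches the paper's: once the action formulas for $\overline S(z)$ and $\overline A(z)$ have been derived and reduced modulo $(x-y)$, the theorem is proved by a line-by-line comparison with FGR's Theorem 4.11, which is exactly what the paper does (it simply declares the result ``clear'' after displaying those formulas). One small imprecision worth fixing: the coefficients $p^{+}_{t,s}/q_{t,s}$ involve $x,y$ for $t\in\{k-1,k\}$ and the $p^{-}_{t,s}/q_{t,s}$ for $t\in\{k,k+1\}$, so $D$ annihilates them only for $t$ outside those sets, not merely for $t\neq k$; this is precisely the observation the paper makes before simplifying the formulas, and it does not affect the overall argument.
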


We finish by studying the action of the Gelfand-Tsetlin algebra on $\overline 
V$. As before, we first consider its action on $V_B$, and in that case we get
\begin{align*}
c_{m,t} S(z)
	&= \gamma_{m,t}(\lambda^z)S(z); 
&c_{m,t} A(z)
	&= \gamma_{m,t}(\lambda^z)A(z) 
	&(m \neq k),
\end{align*}
and
\begin{align*}
c_{k,t} S(z)
	&= \frac{\gamma_{k,t}(\lambda^z) + \gamma_{k,t}(\lambda^{\tau(z)})}{2} S(z)
		+ (x-y)\frac{\gamma_{k,t}(\lambda^z) - \gamma_{k,t}(\lambda^{\tau(z)})}
		{2}A(z); \\
c_{k,t} A(z)
	&= \frac{\gamma_{k,t}(\lambda^z) + \gamma_{k,t}(\lambda^{\tau(z)})}{2}A(z)
		+ \frac{\gamma_{k,t}(\lambda^z) - \gamma_{k,t}(\lambda^{\tau(z)})}
		{2(x-y)}S(z).
\end{align*}
Now $\gamma_{t,s}$ is a symmetric polynomial, which implies that $\tau \cdot 
\gamma_{t,s} = \gamma_{t,s}$. Thus reducing modulo $x-y$ and using item 
\ref{derivative} of Lemma \ref{L:helpful} we obtain
\begin{align*}
c_{m,t} \overline S(z)
	&= \gamma_{m,t}(\lambda^z) \overline S(z);
&c_{m,t} \overline A(z)
	&= \gamma_{m,t}(\lambda^z) \overline A(z); 
	&(m \neq k),
\end{align*}
and
\begin{align*}
c_{k,t} \overline S(z)
	&= \overline \gamma_{k,t}(\lambda^z) \overline S(z); 
&c_{k,t} \overline A(z)
	&= \gamma_{k,t}(\lambda^z) \overline A(z)
		+ \overline D(\gamma_{k,t}(\lambda^z)) \overline S(z).
\end{align*}
From this we recover \cite{FGR-singular-gt}*{Corollary 4.13}. 

\begin{bibdiv}
\begin{biblist}
\bibselect{biblio}
\end{biblist}
\end{bibdiv}
\end{document}